\numberwithin{equation}{section}
\newtheorem{thm}{Theorem}[section]
\newtheorem{pro}[thm]{Proposition}
\newtheorem{cor}[thm]{Corollary}
\begin{document}


\title[A selection theorem for set-valued maps into normally supercompact spaces]
{A selection theorem for set-valued maps into normally supercompact spaces}

\author{V. Valov}
\address{Department of Computer Science and Mathematics,
Nipissing University, 100 College Drive, P.O. Box 5002, North Bay,
ON, P1B 8L7, Canada} \email{veskov@nipissingu.ca}

\date{\today}
\thanks{The author was partially supported by NSERC
Grant 261914-13.}

\keywords{continuous selections, Dugundji spaces, $\kappa$-metrizable
spaces, spaces with closed binary normal subbase, superextensions}

\subjclass{Primary 54C65; Secondary 54F65}


\begin{abstract}
The following selection theorem is established:\\ Let $X$ be a compactum possessing a binary
normal subbase $\mathcal S$ for its closed subsets. Then every set-valued $\mathcal S$-continuous
map $\Phi\colon Z\to X$ 
with closed $\mathcal S$-convex values, where $Z$ is an arbitrary space,
has a continuous single-valued selection.
More generally, if $A\subset Z$ is closed and any map from $A$ to
$X$ is continuously extendable to a map from $Z$ to $X$, then every selection
for $\Phi|A$ can be extended to a selection for $\Phi$.

This theorem implies that if $X$ is a $\kappa$-metrizable (resp., $\kappa$-metrizable and connected) compactum with
a normal binary closed subbase $\mathcal S$,
then every open $\mathcal S$-convex surjection $f\colon X\to Y$ is a zero-soft (resp., soft) map.
Our results provide
some generalizations and
specifications of Ivanov's results (see \cite{i1}, \cite{i2}, \cite{i3})
concerning superextensions of $\kappa$-metrizable compacta.
\end{abstract}

\maketitle

\markboth{}{Selections and $\kappa$-metrizable compacta}



\section{Introduction}
In this paper we assume that all topological spaces are Tychonoff
and all single-valued maps are continuous.

Recall that supercompact spaces and superextensions were introduced
by de Groot \cite{dg}. A space  is {\em supercompact} if it
possesses a binary subbase for its closed subsets. Here, a
collection $\mathcal S$ of closed subsets of $X$ is {\em binary}
provided any linked subfamily of $\mathcal S$ has a non-empty
intersection (we say that a system of subsets of $X$ is {\em linked}
provided any two elements of this system intersect). The
supercompact spaces with binary {\em normal} subbase will be of
special interest for us. A subbase $\mathcal S$ which is closed both
under finite intersections and finite unions is called normal if for
every $S_0,S_1\in\mathcal S$ with $S_0\cap S_1=\varnothing$ there
exists $T_0,T_1\in\mathcal S$ such that $S_0\cap
T_1=\varnothing=T_0\cap S_1$ and $T_0\cup T_1=X$. A space $X$
possessing a binary normal subbase $\mathcal S$ is called {\em
normally supercompact} \cite{vm} and will be denoted by $(X,\mathcal
S)$.

The {\em superextension} $\lambda X$ of $X$ consists of all maximal
linked systems of closed sets in $X$. The family
$$U^+=\{\eta\in\lambda X:F\subset U\hbox{~}\mbox{for
some}\hbox{~}F\in\eta\},$$ $U\subset X$ is open, is a subbase for
the topology of $\lambda X$. It is well known that $\lambda X$ is
normally supercompact. Let $\eta_x$, $x\in X$, be the maximal linked
system of all closed sets in $X$ containing $x$. The map
$x\to\eta_x$ embeds $X$ into $\lambda X$. The book of van Mill
\cite{vm} contains more information about normally supercompact
space and superextensions, see also Fedorchuk-Filippov's book
\cite{f1}.

If $\mathcal S$ is a closed subbase for $X$ and $B\subset X$, let
$I_{\mathcal S}(B)=\bigcap\{S\in\mathcal S:B\subset S\}.$ A subset
$B\subset X$ is called {\em $\mathcal S$-convex} if for all $x,y\in
B$ we have $I_{\mathcal S}(\{x,y\})\subset B$. An {\em $\mathcal
S$-convex map} $f\colon X\to Y$ is a map whose fibers are $\mathcal
S$-convex sets. A set-valued map $\Phi\colon Z\to X$ is said to be
{\em $\mathcal S$-continuous} provided for any $S\in\mathcal S$ both
sets $\{z\in Z:\Phi(z)\cap (X\backslash S)\neq\varnothing\}$ and
$\{z\in Z:\Phi(z)\subset X\backslash S\}$ are open in $Z$.

\begin{thm}
Let $(X,\mathcal S)$ be a normally supercompact space and $Z$ an
arbitrary space. Then every $\mathcal S$-continuous set-valued map
$\Phi\colon Z\to X$ has a single-valued selection provided all
$\Phi(z)$, $z\in Z$, are $\mathcal S$-convex closed subsets of $X$.
More generally, if $A\subset Z$ is closed and every map from $A$ to
$X$ can be extended to a map from $Z$ to $X$, then every selection
for $\Phi|A$ is extendable to a selection for $\Phi$.
\end{thm}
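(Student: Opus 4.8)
The plan is to realize the selection as the map sending each $z$ to the unique point determined by a maximal linked subfamily of $\mathcal S$ compatible with $\Phi(z)$, choosing these subfamilies by a single transfinite recursion that runs simultaneously in $z$, so that continuity can be read off from the $\mathcal S$-continuity of $\Phi$. First I would record the two consequences of $\mathcal S$-continuity that serve as the only analytic input: for every $S\in\mathcal S$ the sets $\{z:\Phi(z)\subseteq S\}$ and $\{z:\Phi(z)\cap S\neq\varnothing\}$ are closed, being the complements of the two open sets in the definition. I would also note that, since $\mathcal S$ is a subbase for the closed sets and is stable under finite unions and intersections, a map $s\colon Z\to X$ is continuous as soon as $s^{-1}(S)$ is closed for every $S\in\mathcal S$; thus the whole problem reduces to controlling, for each $S$, the set $\{z:s(z)\in S\}$.

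Next I fix a well-ordering $\mathcal S=\{S_\alpha:\alpha<\tau\}$ and build, by recursion on $\alpha$ and uniformly in $z$, an increasing linked family $\mathcal L_\alpha(z)\subseteq\mathcal S$ whose partial intersection always meets $\Phi(z)$. At stage $\alpha$ I decide $S_\alpha$: if $S_\alpha$ can be adjoined while keeping the family linked and its intersection meeting $\Phi(z)$ — the governing condition being essentially $\Phi(z)\subseteq S_\alpha$ together with linkedness against the previous choices — then $S_\alpha$ enters $\mathcal L_\alpha(z)$; otherwise $S_\alpha$ is incompatible, and here normality of $\mathcal S$ is used to adjoin a complementary separator $T$ (one of the pair $T_0,T_1$ with $S_\alpha\cap T_1=\varnothing=T_0\cap S_\alpha$ and $T_0\cup T_1=X$) that excludes $S_\alpha$ while preserving linkedness and the fact that the intersection still meets $\Phi(z)$. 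At the limit, $\mathcal L_\tau(z)$ is a maximal linked family, so by binarity $\bigcap\mathcal L_\tau(z)\neq\varnothing$, and by the standard property of binary \emph{normal} subbases this intersection is a single point $s(z)$. By construction $s(z)\in\Phi(z)$, using that a closed $\mathcal S$-convex set is the intersection of the members of $\mathcal S$ containing it. This defines the selection set-theoretically.

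The main obstacle is continuity: I must show $\{z:s(z)\in S\}$ is closed for every $S\in\mathcal S$. The issue is that ``$s(z)\in S_\alpha$'' is governed by the decision taken at stage $\alpha$, and a decision at stage $\alpha$ depends on all earlier decisions, so the relevant ``decision sets'' are transfinite Boolean combinations of the basic closed sets $\{z:\Phi(z)\subseteq S_\beta\}$ and $\{z:\Phi(z)\cap S_\beta\neq\varnothing\}$ for $\beta\le\alpha$; one must verify that the combination actually controlling membership in $S$ remains closed. The two hypotheses interlock precisely here: binarity keeps every partial intersection nonempty, so no decision is vacuous and the recursion never stalls, while normality guarantees that a separator is always available and ties each exclusion to a genuine member of $\mathcal S$ whose preimage is one of the controlled closed sets. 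I expect this bookkeeping, rather than any isolated trick, to be the substantive part of the argument.

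Finally, for the relative statement I would run the same recursion but seed it with the constraints imposed by the given selection $g$ on $A$: for $a\in A$ the family is forced to contain $\{S\in\mathcal S:g(a)\in S\}$, which pins $s|A=g$. To globalize these partial data into decisions defined and continuous over all of $Z$ I invoke the hypothesis that every map $A\to X$ extends to a map $Z\to X$; this is exactly what allows the boundary constraints along $A$ to be propagated consistently to $Z$ before the free recursion takes over. The unrelativized first assertion is then the case $A=\varnothing$.
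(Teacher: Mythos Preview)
Your proposal has a genuine gap, and it lies exactly where you flag it: continuity of the selection built by transfinite recursion. You acknowledge that $\{z:s(z)\in S_\alpha\}$ is a transfinite Boolean combination of the basic closed sets and then write that you ``expect this bookkeeping\dots to be the substantive part of the argument''---but you do not carry it out, and there is no reason it should work. Transfinite unions of closed sets need not be closed, and the decision at stage $\alpha$ genuinely depends on \emph{all} earlier decisions, so the decision sets can degenerate. Two further symptoms of trouble: first, your stated ``governing condition'' $\Phi(z)\subseteq S_\alpha$ is not the right one---if you only adjoin $S_\alpha$ when $\Phi(z)\subseteq S_\alpha$, the terminal intersection is $I_{\mathcal S}(\Phi(z))=\Phi(z)$, not a point; the correct criterion asks whether $S_\alpha$ meets $\Phi(z)\cap\bigcap\mathcal L_{<\alpha}(z)$, which is much harder to control in $z$. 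Second, at the ``otherwise'' branch you invoke normality to adjoin a separator $T$, but normality only asserts the \emph{existence} of a separating pair $T_0,T_1$; there is no canonical choice, so you have introduced a further arbitrary selection at every stage with no mechanism to make it continuous. Finally, your use of the extension hypothesis in the relative case is only gestured at: saying it ``allows the boundary constraints along $A$ to be propagated'' has no concrete meaning for a pointwise recursion.

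The paper avoids all of this with a single explicit formula. It quotes a continuous map $\xi\colon X\times\exp X\to X$ (a standard ``nearest-point'' retraction available in any normally supercompact space) satisfying $\xi(x,F)\in I_{\mathcal S}(F)$ always and $\xi(x,F)=x$ whenever $x\in I_{\mathcal S}(F)$. Given a selection $g$ for $\Phi|A$, extend it to $\bar g\colon Z\to X$ using the hypothesis and set $h(z)=\xi\bigl(\bar g(z),\Phi(z)\bigr)$. Then $h(z)\in I_{\mathcal S}(\Phi(z))=\Phi(z)$ by $\mathcal S$-convexity, and $h|A=g$ automatically. Continuity of $h$ is then verified directly on subbasic open sets $X\setminus S$, using only the $\mathcal S$-continuity of $\Phi$; no recursion is needed. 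For the absolute statement one simply takes $\bar g$ constant.
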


\begin{cor}
Let $\Phi\colon Z\to X$ be an $\mathcal S$-continuous set-valued map
such that each $\Phi(z)\subset X$ is closed, where $X$ is a space
with a binary normal closed subbase $\mathcal S$ and $Z$ arbitrary.
Then the map $\Psi\colon Z\to X$, $\Psi(z)=I_{\mathcal
S}\big(\Phi(z)\big)$, has a continuous selection.
\end{cor}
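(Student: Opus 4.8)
The plan is to deduce the Corollary from Theorem~1.1 by applying that result to the map $\Psi$ itself. To do so I must verify that $\Psi$ has closed $\mathcal S$-convex values and is $\mathcal S$-continuous. Closedness of each $\Psi(z)=I_{\mathcal S}(\Phi(z))$ is immediate, being an intersection of closed sets, and $\mathcal S$-convexity follows from the general fact that $I_{\mathcal S}(B)$ is always $\mathcal S$-convex: if $x,y\in I_{\mathcal S}(\Phi(z))$, then $x$ and $y$ lie in every $S\in\mathcal S$ containing $\Phi(z)$, whence $I_{\mathcal S}(\{x,y\})\subset S$ for each such $S$, and therefore $I_{\mathcal S}(\{x,y\})\subset I_{\mathcal S}(\Phi(z))$.

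The heart of the matter is the $\mathcal S$-continuity of $\Psi$. First I would record the basic identity: for every $S\in\mathcal S$ one has $\Phi(z)\subset S$ if and only if $\Psi(z)\subset S$. Indeed, if $\Phi(z)\subset S$ then $S$ is one of the sets in the intersection defining $\Psi(z)$, so $\Psi(z)\subset S$; conversely $\Phi(z)\subset\Psi(z)$ always holds. Negating this equivalence gives that $\Psi(z)\cap(X\setminus S)\neq\varnothing$ iff $\Phi(z)\cap(X\setminus S)\neq\varnothing$, so the set $\{z:\Psi(z)\cap(X\setminus S)\neq\varnothing\}$ coincides with $\{z:\Phi(z)\cap(X\setminus S)\neq\varnothing\}$ and is open by the $\mathcal S$-continuity of $\Phi$.

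The main obstacle is to show that $\{z:\Psi(z)\subset X\setminus S\}$ is open, since in general $\Phi(z)$ may miss $S$ while its hull $\Psi(z)$ meets it, so one cannot transfer openness directly from $\Phi$. Here I would fix $z_0$ with $\Psi(z_0)\cap S=\varnothing$ and exploit that $X$, being normally supercompact, is compact. The family $\{S\}\cup\{S'\in\mathcal S:\Phi(z_0)\subset S'\}$ consists of closed sets with empty intersection, so some finite subfamily has empty intersection; since every finite intersection of the $S'$ contains the nonempty set $\Phi(z_0)$, the set $S$ must participate, and closure of $\mathcal S$ under finite intersections yields a single $S^*\in\mathcal S$ with $\Phi(z_0)\subset S^*$ and $S^*\cap S=\varnothing$. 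Now normality of $\mathcal S$ enters: applied to the disjoint pair $S^*,S\in\mathcal S$ it produces $T_0,T_1\in\mathcal S$ with $S^*\cap T_1=\varnothing=T_0\cap S$ and $T_0\cup T_1=X$. Then $\Phi(z_0)\subset S^*\subset X\setminus T_1$, so $z_0$ lies in the set $V=\{z:\Phi(z)\subset X\setminus T_1\}$, which is open by the $\mathcal S$-continuity of $\Phi$. For any $z\in V$ we have $\Phi(z)\subset X\setminus T_1\subset T_0$, hence $\Psi(z)=I_{\mathcal S}(\Phi(z))\subset T_0$ because $T_0\in\mathcal S$; since $T_0\cap S=\varnothing$, this gives $\Psi(z)\subset X\setminus S$. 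Thus $V$ is an open neighborhood of $z_0$ contained in $\{z:\Psi(z)\subset X\setminus S\}$, so the latter set is open.

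Having verified all hypotheses, I would finish by applying Theorem~1.1 to $\Psi$, obtaining the desired continuous single-valued selection. The only delicate points to watch are the nonemptiness of the values $\Phi(z_0)$, needed so that $S$ cannot be discarded in the compactness step, and the precise bookkeeping of the inclusions $S^*\subset X\setminus T_1\subset T_0$ coming from $T_0\cup T_1=X$; everything else is formal.
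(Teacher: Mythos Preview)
Your proof is correct and follows essentially the same route as the paper: reduce to Theorem~1.1 by showing $\Psi$ is $\mathcal S$-continuous, with the nontrivial step being to produce, for $\Psi(z_0)\cap S=\varnothing$, a single $S^*\in\mathcal S$ containing $\Phi(z_0)$ and disjoint from $S$, and then to invoke normality. The only minor variation is that you obtain $S^*$ via compactness and closure of $\mathcal S$ under finite intersections, whereas the paper obtains it directly from the binary property of $\mathcal S$ (the family $\{S\}\cup\{S'\in\mathcal S:\Phi(z_0)\subset S'\}$ is not linked, so $S$ is disjoint from some such $S'$); both arguments are valid.
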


A map $f\colon X\to Y$ is invertible if for any space $Z$ and a map
$g\colon Z\to Y$ there exists a map $h\colon Z\to X$ with $f\circ
h=g$. If $X$ has a closed subbase $\mathcal S$, we say $f\colon X\to
Y$ is {\em $\mathcal S$-open} provided $f(X\backslash S)\subset Y$
is open for every $S\in\mathcal S$. Theorem 1.1 yields next
corollary.

\begin{cor}
Let $X$ be a space possessing a binary normal closed subbase
$\mathcal S$. Then every $\mathcal S$-convex $\mathcal S$-open
surjection $f\colon X\to Y$ is invertible.
\end{cor}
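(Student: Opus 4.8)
The plan is to realize invertibility of $f$ as a selection problem and then invoke Theorem 1.1. Fix an arbitrary space $Z$ and a map $g\colon Z\to Y$; the task is to produce $h\colon Z\to X$ with $f\circ h=g$. Define a set-valued map $\Phi\colon Z\to X$ by $\Phi(z)=f^{-1}\big(g(z)\big)$. A single-valued selection $h$ for $\Phi$ is exactly a map satisfying $h(z)\in f^{-1}(g(z))$, i.e. $f(h(z))=g(z)$, so it is precisely the lift I want. It therefore suffices to verify that $\Phi$ meets the hypotheses of Theorem 1.1.

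First, the values. Since $f$ is a surjection and $g(z)\in Y=f(X)$, each $\Phi(z)$ is nonempty; being the $f$-fiber over a point it is closed, and since $f$ is $\mathcal S$-convex its fibers are $\mathcal S$-convex, so every $\Phi(z)$ is a nonempty closed $\mathcal S$-convex subset of $X$. Next, the two $\mathcal S$-continuity conditions. For $S\in\mathcal S$ one computes directly
\[
\{z\in Z:\Phi(z)\cap(X\setminus S)\neq\varnothing\}=g^{-1}\big(f(X\setminus S)\big),\qquad
\{z\in Z:\Phi(z)\subset X\setminus S\}=g^{-1}\big(Y\setminus f(S)\big),
\]
because $f^{-1}(g(z))$ meets $X\setminus S$ iff $g(z)\in f(X\setminus S)$, and is contained in $X\setminus S$ iff $g(z)\notin f(S)$. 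The first set is open since $f$ is $\mathcal S$-open, so $f(X\setminus S)$ is open, and $g$ is continuous.

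The main point to settle is the openness of the second set, which amounts to showing that $f(S)$ is closed in $Y$ for each $S\in\mathcal S$. Here I would use that a binary closed subbase forces $X$ to be compact: any subfamily of $\mathcal S$ with the finite intersection property is in particular linked, hence has nonempty intersection, so the closed form of the Alexander subbase lemma applies and $X$ is compact. As all spaces are assumed Tychonoff, $X$ is then compact Hausdorff and $Y$ is Hausdorff; consequently $S$, being closed in the compact space $X$, is compact, and $f(S)$ is a compact, hence closed, subset of $Y$. Thus the second set is open as well, $\Phi$ is $\mathcal S$-continuous, and Theorem 1.1 yields a single-valued selection $h\colon Z\to X$, giving $f\circ h=g$. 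Since $Z$ and $g$ were arbitrary, $f$ is invertible. I expect the compactness observation, and the resulting closedness of $f(S)$, to be the only nonroutine step; the remainder is just the bookkeeping of transporting the fibers of $f$ back through $g$.
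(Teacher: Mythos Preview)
Your proof is correct and follows essentially the same route as the paper: both define $\Phi(z)=f^{-1}(g(z))$, use $\mathcal S$-openness of $f$ together with closedness of $f$ (coming from compactness of $X$, which holds because $\mathcal S$ is a binary closed subbase) to verify $\mathcal S$-continuity of $\Phi$, and then apply Theorem~1.1. The only cosmetic difference is that the paper factors $\Phi$ as $\phi\circ g$ with $\phi(y)=f^{-1}(y)$, while you work directly with $\Phi$ and spell out the compactness argument more explicitly.
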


Another corollary of Theorem 1.1 is a specification of Ivanov's
results \cite{i3} (see also \cite{i1} and \cite{i2}). Here, a map
$f\colon X\to Y$ is $\mathcal{A}$-soft, where $\mathcal A$ is a
class of spaces, if for any $Z\in\mathcal A$, its closed subset $A$
and any two maps $k\colon Z\to Y$, $h\colon A\to X$ with $f\circ
h=k|A$ there exists a map $g\colon Z\to X$ extending $h$ such that
$f\circ g=k$. When $\mathcal A$ is the family of all
($0$-dimensional) paracompact spaces, then $\mathcal A$-soft maps
are called ($0$-)soft \cite{sc1}.

\begin{cor}
Let $\mathcal A$ be a given class of spaces and $X$ be an absolute extensor for all
$Z\in\mathcal A$. If $X$ has a binary normal closed subbase
$\mathcal S$, then any $\mathcal S$-convex $\mathcal S$-open
surjection $f\colon X\to Y$ is $\mathcal A$-soft.
\end{cor}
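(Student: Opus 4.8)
The plan is to reduce the statement to the relative (extension) assertion of Theorem 1.1 by encoding the lifting problem as a selection problem for the fiberwise preimage map. Given a space $Z\in\mathcal A$, a closed set $A\subset Z$, and maps $k\colon Z\to Y$ and $h\colon A\to X$ with $f\circ h=k|A$, I would define the set-valued map $\Phi\colon Z\to X$ by $\Phi(z)=f^{-1}\big(k(z)\big)$. Since $f$ is $\mathcal S$-convex, each fiber $f^{-1}(y)$ is $\mathcal S$-convex, so every $\Phi(z)$ is an $\mathcal S$-convex subset of $X$; it is also closed, being the preimage of a point under the continuous map $f$. Moreover $h$ is a selection for $\Phi|A$: for $a\in A$ we have $f\big(h(a)\big)=k(a)$, hence $h(a)\in f^{-1}\big(k(a)\big)=\Phi(a)$.

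The key step is to verify that $\Phi$ is $\mathcal S$-continuous, i.e.\ that for every $S\in\mathcal S$ both sets $\{z:\Phi(z)\cap(X\setminus S)\neq\varnothing\}$ and $\{z:\Phi(z)\subset X\setminus S\}$ are open. For the first, $\Phi(z)\cap(X\setminus S)\neq\varnothing$ holds exactly when $k(z)\in f(X\setminus S)$, so this set equals $k^{-1}\big(f(X\setminus S)\big)$; since $f$ is $\mathcal S$-open, $f(X\setminus S)$ is open in $Y$, and openness follows from continuity of $k$. For the second, $\Phi(z)\subset X\setminus S$ holds exactly when $k(z)\notin f(S)$, so this set equals $k^{-1}\big(Y\setminus f(S)\big)$. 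Here I would use that a space carrying a binary closed subbase is compact: by binarity a linked subfamily of $\mathcal S$ has nonempty intersection, so a subfamily with empty intersection is not linked and hence lacks the finite intersection property, and the Alexander subbase lemma applied to the subbasic open sets $X\setminus S$ yields compactness of $X$. Consequently the closed set $S$ is compact, its image $f(S)$ is compact and hence closed in the Hausdorff space $Y$, so $Y\setminus f(S)$ is open and so is its $k$-preimage.

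With $\Phi$ established as an $\mathcal S$-continuous map with $\mathcal S$-convex closed values, I would invoke the second, relative assertion of Theorem 1.1. The hypothesis that $X$ is an absolute extensor for all $Z\in\mathcal A$ says precisely that, for $Z\in\mathcal A$ and $A\subset Z$ closed, every map $A\to X$ extends over $Z$, which is exactly the extension hypothesis required by that assertion. Hence the selection $h$ of $\Phi|A$ extends to a selection $g\colon Z\to X$ of $\Phi$. Then $g$ extends $h$ and satisfies $g(z)\in\Phi(z)=f^{-1}\big(k(z)\big)$, i.e.\ $f\big(g(z)\big)=k(z)$ for all $z$, so $f\circ g=k$. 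Since $Z\in\mathcal A$, $A$, $k$ and $h$ were arbitrary, this shows that $f$ is $\mathcal A$-soft.

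I expect the only genuine obstacle to be the second $\mathcal S$-continuity condition, which is where the compactness of $X$ (forced by the mere existence of a binary closed subbase) is actually used, via closedness of $f(S)$; the remaining verifications are direct translations of the definitions of $\mathcal S$-convex, $\mathcal S$-open and absolute extensor.
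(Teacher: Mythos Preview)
Your proof is correct and follows essentially the same route as the paper: define $\Phi(z)=f^{-1}(k(z))$, check it is $\mathcal S$-continuous with closed $\mathcal S$-convex values, and apply the relative form of Theorem~1.1 using the absolute-extensor hypothesis. You have in fact supplied more detail than the paper, which asserts the $\mathcal S$-continuity of $\Phi$ without justification; your verification (via $\mathcal S$-openness of $f$ for the lower condition and compactness of $X$---hence closedness of $f$---for the upper one) is exactly the argument the paper gives earlier in its proof of Corollary~1.3.
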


Theorem 1.1 is also applied to establish the following proposition:
\begin{pro}
Let $X$ be a $\kappa$-metrizable (resp., $\kappa$-metrizable and connected) compactum with
a normal binary closed subbase $\mathcal S$.
Then every open $\mathcal S$-convex surjection $f\colon X\to Y$ is a zero-soft (resp., soft) map.
\end{pro}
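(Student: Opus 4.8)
The plan is to obtain Proposition 1.5 from Corollary 1.4, so that the argument separates into a one-line reduction and a structural statement about absolute extensors. First I would note that the hypotheses on $f$ already match those of Corollary 1.4: every $S\in\mathcal S$ is closed, hence $X\setminus S$ is open, and an open map carries it to an open subset of $Y$; thus an open surjection is automatically $\mathcal S$-open. Consequently an open $\mathcal S$-convex surjection $f\colon X\to Y$ is an $\mathcal S$-convex $\mathcal S$-open surjection, and Corollary 1.4 makes it $\mathcal A$-soft for every class $\mathcal A$ such that $X$ is an absolute extensor for the members of $\mathcal A$. It remains to show that $X$ is an absolute extensor for all $0$-dimensional paracompact spaces, and, in the connected case, for all paracompact spaces; by the definition preceding Corollary 1.4 these give zero-softness and softness respectively.

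For the zero-dimensional case I would use the structure theory of $\kappa$-metrizable compacta. By Shchepin's theorem $X$ is openly generated, i.e. $X=\varprojlim\{X_\alpha,\,p^\beta_\alpha\}$ is the limit of a continuous well-ordered inverse system of metrizable compacta with open surjective bonding maps. A closing-off (spectral) argument should let me reflect the subbase $\mathcal S$ down the system, endowing each $X_\alpha$ with a normal binary closed subbase compatible with the maps $p_\alpha\colon X\to X_\alpha$, so that every $(X_\alpha,\mathcal S_\alpha)$ is a metrizable normally supercompact compactum. Granting the base case that such a compactum is an absolute extensor for $0$-dimensional spaces, the transfinite step lifts this property to the limit via the openness of the bonding maps (the standard stability of $\mathrm{AE}(0)$/Dugundji spaces under limits of open spectra), giving $X\in\mathrm{AE}(0)$.

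For the connected case I would run the same spectrum, but now the extension problems have arbitrary dimension, so I need each factor to be an absolute extensor in every dimension. Here connectedness is exploited through the convexity supplied by $\mathcal S$: the hulls $I_{\mathcal S}(\{x,y\})$ link points, and by the van Mill theory of binary convexities (see \cite{vm}) a connected metrizable normally supercompact compactum is an absolute retract and hence an absolute extensor for all paracompact spaces. Since $X$ connected forces each $X_\alpha=p_\alpha(X)$ to be connected, feeding this stronger base case into the same open spectral induction yields that $X$ is an absolute extensor for all paracompact spaces, whence $f$ is soft.

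The main obstacle is precisely this structural input, and most acutely in the connected case: one must first prove the spectral compatibility of the binary normal subbase, so that every factor of the Shchepin system is again normally supercompact, and then establish that a connected metrizable normally supercompact compactum is an $\mathrm{AR}$ — that is, that the $\mathcal S$-convexity is contractible and admits a Dugundji-type averaging allowing extension in all dimensions. The zero-dimensional case is considerably softer, since there no averaging is needed and the selection machinery of Theorem 1.1 already transports the extension once $\mathrm{AE}(0)$ of the metrizable factors is in hand.
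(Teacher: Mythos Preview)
Your reduction to Corollary 1.4 is correct and is exactly what the paper does: an open surjection is automatically $\mathcal S$-open, so it suffices to show that $X$ is Dugundji (resp.\ an absolute retract). The divergence is in how that structural fact is obtained.

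The paper does \emph{not} decompose $X$ spectrally. Instead it embeds $X$ into a Tychonoff cube $\mathbb I^\tau$ and invokes Shirokov's characterization of $\kappa$-metrizable compacta to obtain an extender $\mathrm e\colon\mathcal T_X\to\mathcal T_{\mathbb I^\tau}$ sending disjoint open sets to disjoint open sets. Using this extender together with the $\mathcal S$-hull operator $I_{\mathcal S}$, one writes down explicitly a usco retraction $r\colon\mathbb I^\tau\to X$; by Dranishnikov's criterion this makes $X$ Dugundji. In the connected case the values of $r$ are $\mathcal S$-convex, hence connected, so $X$ is $\mathrm{AE}(1)$; a result of Fedorchuk then produces a map $\mathbb I^\tau\to\exp X$ fixing $X$ pointwise, and composing with van Mill's retraction $\exp X\to X$ (valid for any normally supercompact $X$) gives a single-valued retraction, so $X\in\mathrm{AR}$. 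The binary normal subbase is used only through the hull operator and the $\exp$-retraction; no compatibility with any inverse system is needed.

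Your spectral route, by contrast, hinges on reflecting $\mathcal S$ down to the metrizable factors of a Shchepin spectrum. You flag this yourself as the main obstacle, and it is a genuine one: nothing guarantees that the elements of $\mathcal S$ are cylindrical over a cofinal set of factors, or that a closing-off argument preserves both binarity and normality simultaneously. Without that, the induction has no base case. A second, smaller gap lurks in the connected case: even granting that each $X_\alpha$ is a metrizable $\mathrm{AR}$, you would still need the limit of an open $\sigma$-spectrum of $\mathrm{AR}$'s to be an $\mathrm{AR}$, and openness of the bonding maps alone does not give the softness required for that preservation. The paper's direct construction sidesteps both issues entirely.
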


\begin{cor}\cite{i1},\cite{i2}
Let $X$ be a $\kappa$-metrizable (resp., $\kappa$-metrizable and connected) compactum.
Then $\lambda f\colon\lambda X\to \lambda Y$ is a zero-soft (resp., soft) map
for any open surjection $f\colon X\to Y$.
\end{cor}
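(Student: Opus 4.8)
The plan is to deduce this directly from Proposition 1.5 by viewing $\lambda f$ as an open $\mathcal S$-convex surjection between superextensions. I would give $\lambda X$ its standard closed subbase $\mathcal S$, the one generated by the sets $A^{+}=\{\eta\in\lambda X:A\in\eta\}$ for $A\subseteq X$ closed (these being precisely the complements of the basic open sets $U^{+}$); by \cite{vm} the pair $(\lambda X,\mathcal S)$ is normally supercompact. Everything then reduces to checking that $\lambda X$ meets the standing hypotheses of Proposition 1.5 and that $\lambda f$ is an open $\mathcal S$-convex surjection.

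First I would collect the functorial facts about $\lambda$. By Ivanov's theorem \cite{i1}, $\kappa$-metrizability of the compactum $X$ is inherited by $\lambda X$, and $\lambda X$ is connected whenever $X$ is; this is exactly what separates the zero-soft case from the soft case. Writing $\lambda f(\eta)=\{B\subseteq Y\ \text{closed}:f^{-1}(B)\in\eta\}$, a short argument using the compactness of $X$ (so that $f(F)$ is closed for each $F\in\eta$) shows that this is a maximal linked system, that $\lambda f$ is continuous, and that $\lambda f$ is onto when $f$ is. The one structural input I would import rather than reprove is that $\lambda$ sends open maps of compacta to open maps, so $\lambda f$ is open. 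I expect this imported openness, together with Ivanov's preservation of $\kappa$-metrizability, to carry the real weight of the argument.

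The remaining point, and the one genuinely tied to the subbase, is that $\lambda f$ is $\mathcal S$-convex. Fix $\xi\in\lambda Y$ and $\eta_{1},\eta_{2}\in(\lambda f)^{-1}(\xi)$. Since $\eta_{i}\in A^{+}$ means $A\in\eta_{i}$ and each $A^{+}$ lies in $\mathcal S$, one has
\[
I_{\mathcal S}(\{\eta_{1},\eta_{2}\})\subseteq\bigcap\{A^{+}:A\in\eta_{1}\cap\eta_{2}\}=\{\zeta\in\lambda X:\eta_{1}\cap\eta_{2}\subseteq\zeta\}.
\]
For any $\zeta$ in the right-hand set and any closed $B\in\xi$ we get $f^{-1}(B)\in\eta_{1}\cap\eta_{2}\subseteq\zeta$, hence $B\in\lambda f(\zeta)$; thus $\xi\subseteq\lambda f(\zeta)$, and maximality of $\xi$ forces $\lambda f(\zeta)=\xi$. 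Therefore $I_{\mathcal S}(\{\eta_{1},\eta_{2}\})\subseteq(\lambda f)^{-1}(\xi)$, i.e.\ the fibres of $\lambda f$ are $\mathcal S$-convex. With all hypotheses in place, Proposition 1.5 applied to the open $\mathcal S$-convex surjection $\lambda f$ yields that $\lambda f$ is zero-soft, and soft once $X$ (hence $\lambda X$) is connected. The main obstacle is thus not this convexity computation, which is essentially bookkeeping with maximal linked systems, but the preservation properties of the functor $\lambda$ that feed the hypotheses of Proposition 1.5.
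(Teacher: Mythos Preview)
Your proposal is correct and follows essentially the same route as the paper: verify that $\lambda X$ is $\kappa$-metrizable (and connected when $X$ is), that $\lambda f$ is open, and that the fibres of $\lambda f$ are $\mathcal S$-convex for the standard subbase $\mathcal S=\{F^{+}:F\subset X\ \text{closed}\}$, then invoke Proposition~1.5. The only cosmetic difference is that the paper establishes $\mathcal S$-convexity in one line via the identity $(\lambda f)^{-1}(\nu)=\bigcap\{f^{-1}(H)^{+}:H\in\nu\}$, which exhibits each fibre directly as an intersection of subbase elements, whereas you verify the two-point convexity condition; the content is the same.
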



\section{Proof of Theorem 1.1 and Corollaries 1.2 - 1.4}

Recall  that a set-valued map $\Phi:Z\to X$ is lower semi-continuous
(br., lsc) if the set $\{z\in Z:\Phi(z)\cap U\neq\varnothing\}$ is
open in $Z$ for any open $U\subset X$. $\Phi$ is upper
semi-continuous (br., usc) provided that  the set $\{z\in
Z:\Phi(z)\subset U\}$ is open in $Z$ whenever $U\subset X$ is open.
Upper semi-continuous and compact-valued maps are called usco maps.
If $\Phi$ is both lsc and usc, it is said to be continuous.
Obviously, every continuous set-valued map $\Phi:Z\to X$ is
$\mathcal S$-continuous, where $\mathcal S$ is a binary closed
normal subbase for $X$. Let $C(X,Y)$ denote the set of all
(continuous single-valued) maps from $X$ to $Y$.

{\em Proof of Theorem $1.1.$} Suppose $X$ has a binary normal closed
subbase $\mathcal S$ and $\Phi\colon Z\to X$ is a set-valued
$\mathcal S$-continuous map with closed $\mathcal S$-convex values.
Let $A\subset Z$ be a closed set such that every $f\in C(A,X)$ can
be extended to a map $\bar{f}\in C(Z,X)$. Fix a selection $g\in
C(A,X)$ for $\Phi|A$ and its extension $\bar{g}\in C(Z,X)$. By
\cite[Theorem 1.5.18]{vm}, there exists a (continuous) map
$\xi:X\times\exp X\to X$, defined by
$$\xi(x,F)=\bigcap\{I_{\mathcal S}(\{x,a\}):a\in F\}\cap I_{\mathcal
S}(F),$$ where $\exp X$ is the space of all closed subsets of $X$
with the Vietoris topology. This map has the following properties
for any $F\in\exp X$: (i) $\xi(x,F)=x$ if $x\in I_{\mathcal S}(F)$;
(ii) $\xi(x,F)\in I_{\mathcal S}(F)$, $x\in X$. Because each
$\Phi(z)$, $z\in Z$, is a closed $\mathcal S$-convex set,
$I_{\mathcal S}(\Phi(z))=\Phi(z)$, see \cite[Theorem 1.5.7]{vm}. So,
for all $z\in Z$ we have
$h(z)=\xi\big(\bar{g}(z),\Phi(z)\big)\in\Phi(z)$. Therefore, we
obtain a map $h\colon Z\to X$ which is a selection for $\Phi$ and
$h(z)=g(z)$ for all $z\in A$. It remains to show that $h$ is
continuous. We can show that the subbase could be supposed to be invariant
with respect to finite intersections. Because $\xi$ is continuous, this would
imply continuity of $h$. But instead of that, we follow the arguments from
the proof of \cite[Theorem 1.5.18]{vm}.

Let $z_0\in Z$ and $x_0=h(z_0)\in W$ with $W$ being open in $X$.
We may assume that $W=X\backslash S$ for
some $S\in\mathcal S$. Because
$x_0$ is the intersection of a subfamily of the binary family $\mathcal S$,
there exists $S^*\in\mathcal S$ containing $x_0$ and disjoint from $S$.
Since $\mathcal S$ is normal, there exist
$S_0, S_1\in\mathcal S$ such that $S\subset S_1\backslash S_0$,
$x_0\in S^*\subset S_0\backslash S_1$ and $S_0\cup S_1=X$. Hence, $x_0\in
(X\backslash S_1)\cap\Phi(z_0)$. Because $\Phi$ is $\mathcal
S$-continuous, there exists a neighborhood $O_1(z_0)$ of $z_0$ such
that $\Phi(z)\cap (X\backslash S_1)\neq\varnothing$ for every $z\in
O_1(z_0)$. Observe that $\bar{g}(z_0)\in X\backslash S_1$ provided
$\Phi(z_0)\cap S_1\neq\varnothing$, otherwise $x_0\in
I_{\mathcal S}(\{\bar{g}(z_0),a\})\subset S_1$, where
$a\in\Phi(z_0)\cap S_1$. Consequently, we have two possibilities:
either $\Phi(z_0)\subset X\backslash S_1$ or $\Phi(z_0)$ intersects
both $S_1$ and $X\backslash S_1$. In the first case there exists a
neighborhood $O_2(z_0)$ with $\Phi(z)\subset X\backslash S_1$ for
all $z\in O_2(z_0)$, and in the second one take $O_2(z_0)$ such that
$\bar{g}\big(O_2(z_0)\big)\subset X\backslash S_1$ (recall that in
this case $\bar{g}(z_0)\in X\backslash S_1$). In both cases let
$O(z_0)=O_1(z_0)\cap O_2(z_0)$. Then, in the first case  we have
$h(z)\in\Phi(z)\subset X\backslash S_1\subset X\backslash S$ for
every $z\in O(z_0)$. In the second case let $a(z)\in\Phi(z)\cap
(X\backslash S_1)$, $z\in O(z_0)$. Consequently, $h(z)\in
I_{\mathcal S}(\{\bar{g}(z),a(z)\})\subset X\backslash S_1\subset
S_0\subset X\backslash S$ for any $z\in O(z_0)$.  Hence, $h$ is
continuous.

When the set $A$ is a point $a$ we define $g(a)$ to be an arbitrary
point in $\Phi(a)$ and $\bar{g}(x)=g(a)$ for all $x\in X$. Then the above
arguments provide a selection for $\Phi$.
\hfill$\square$

{\em Proof of Corollary $1.2.$} Since each $\Psi(z)$ is $\mathcal
S$-convex, by Theorem 1.1 it suffices to show that $\Psi$ is
$\mathcal S$-continuous. To this end, suppose that $F_0\in\mathcal
S$ and $\Psi(z_0)\cap (X\backslash F_0)\neq\varnothing$ for some
$z_0\in Z$. Then $\Phi(z_0)\cap (X\backslash F_0)\neq\varnothing$,
for otherwise $\Phi(z_0)\subset F_0$ and $\Psi(z_0)$, being
intersection of all $F\in\mathcal S$ containing $\Phi(z_0)$, would
be contained in $F_0$. Since $\Phi$ is $\mathcal S$-continuous,
there exists a neighborhood $O(z_0)\subset Z$ of $z_0$ such that
$\Phi(z)\cap (X\backslash F_0)\neq\varnothing$ for all $z\in
O(z_0)$. Consequently, $\Psi(z)\cap (X\backslash
F_0)\neq\varnothing$, $z\in O(z_0)$.

Suppose now that $\Psi(z_0)\subset X\backslash F_0$. Then
$\Psi(z_0)\cap F_0=\varnothing$, so there exists $S_0\in\mathcal S$
with $\Phi(z_0)\subset S_0$ and $S_0\cap F_0=\varnothing$ (recall
that $\mathcal S$ is binary). Since $\mathcal S$ is normal, we can
find $S_1, F_1\in\mathcal S$ such that $S_0\subset S_1\backslash
F_1$, $F_0\subset F_1\backslash S_1$ and $F_1\cup S_1=X$. Using
again that $\Phi$ is $\mathcal S$-continuous to choose a
neighborhood $U(z_0)\subset Z$ of $z_0$ with $\Phi(z)\subset
X\backslash F_1\subset S_1$ for all $z\in U(z_0)$. Hence,
$\Psi(z)\subset S_1\subset X\backslash F_0$, $z\in U(z_0)$, which
completes the proof. \hfill$\square$

{\em Proof of Corollary $1.3.$} Let $X$ possess a binary normal
closed subbase $\mathcal S$, $f\colon X\to Y$ be an $\mathcal
S$-open $\mathcal S$-convex surjection, and $g:Z\to Y$ be a map.
Since $f$ is both $\mathcal S$-open and closed (recall that $X$ is
compact as a space with a binary closed subbase), the map
$\phi\colon Y\to X$, $\phi(y)=f^{-1}(y)$, is $\mathcal S$-continuous
and $\mathcal S$-convex valued. So is the map $\Phi=\phi\circ g:Z\to
X$. Then, by Theorem 1.1, $\Phi$ admits a continuous selection
$h:Z\to X$. Obviously, $g=f\circ h$. Hence, $f$ is
invertible.\hfill$\square$

{\em Proof of Corollary $1.4.$} Suppose $X$ is a compactum with a
normal binary closed subbase $\mathcal S$ such that $X$ is an
absolute extensor for all $Z\in\mathcal A$. Let us show that every
$\mathcal S$-open $\mathcal S$-convex surjection $f\colon X\to Y$ is
$\mathcal A$-soft. Take a space $Z\in\mathcal A$, its closed subset
$A$ and two maps $k\colon Z\to Y$, $h\colon A\to X$ such that
$k|A=f\circ h$. Then $h$ can be continuously extended to a map
$\bar{h}\colon Z\to X$. Moreover, the set-valued map $\Phi:Z\to X$,
$\Phi(z)=f^{-1}(k(z))$, is $\mathcal S$-continuous and has $\mathcal
S$-convex values. Hence, by Theorem 1.1, there is a selection
$g\colon Z\to X$ for $\Phi$ extending $h$. Then $f\circ g=k$. So,
$f$ is $\mathcal A$-soft. \hfill$\square$

\section{Proof of Proposition 1.5 and Corollary 1.6}

{\em Proof of Proposition $1.5.$}
According to Corollary 1.4, it suffices to show that $X$ is a Dugundji space (resp., an absolute retract)
provided $X$ is a $\kappa$-metrizable (resp., $\kappa$-metrizable and connected) compactum with
a normal binary closed subbase $\mathcal S$ (recall that the class of Dugundji spaces coincides with the class
of compact absolute extensors for $0$-dimensional spaces, see \cite{h}). To this end, we follow the arguments
from the proof of \cite[Proposition 3.2]{vv}. Suppose first that
$X$ is a $\kappa$-metrizable compactum with a normal binary closed subbase $\mathcal S$. Consider $X$ as a
subset of a Tyhonoff cube $\mathbb I^\tau$. Then, by \cite{shi}
(see also \cite{vv} for another proof), there exists a function $\mathrm{e}\colon\mathcal T_X\to\mathcal T_{\mathbb{I}^\tau}$
between
the topologies of $X$ and $\mathbb I^\tau$ such that:
\begin{itemize}
\item[($\mathrm{e}$1)] $\mathrm{e}(\varnothing)=\varnothing$ and $\mathrm{e}(U)\cap X=U$ for any open $U\subset X$;
\item[($\mathrm{e}$2)] $\mathrm{e}(U)\cap\mathrm{e}(V)=\varnothing$ for any two disjoint open sets $U,V\subset X$.
\end{itemize}
Consider the set valued map $r\colon\mathbb I^\tau\to X$ defined by
$$r(y)=\bigcap\{I_{\mathcal S}(\overline{U}):y\in e(U), U\in\mathcal
T_{X}\}\hbox{~}\mbox{if}\hbox{~} y\in\bigcup\{e(U): U\in\mathcal
T_{X}\}\leqno{(1)}$$
$$\hbox{~}\mbox{and}\hbox{~} r(y)=X
\hbox{~}\mbox{otherwise},\hbox{~}$$ where $\overline U$ is the
closure of $U$ in $X$. According to condition ($\mathrm{e}$2),
the system
$\gamma_y=\{U\in\mathcal T_{X} :y\in e(U)\}$ is linked for every $y\in\mathbb I^\tau$.
Consequently, $\omega_y=\{S\in\mathcal S:\overline{U}\subset
S\hbox{~}\mbox{for some}\hbox{~}U\in\gamma_y\}$ is also linked. This implies
$r(y)=\bigcap\{S:S\in\omega_y\}\neq\varnothing$ because $\mathcal S$ is binary.

{\em Claim. $r(x)=\{x\}$ for every $x\in X$.}

Suppose there is another point $z\in r(x)$. Then, by normality of $\mathcal S$, there exist
two elements $S_0,S_1\in\mathcal S$ such that $x\in S_0\backslash S_1$, $z\in S_1\backslash S_0$
and $S_0\cup S_1=X$. Choose an open neighborhood $V$ of $x$ with $\overline V\subset S_0\backslash S_1$.
Observe that $x\in\mathrm{e}(V)$, so $z\in I_{\mathcal S}(\overline{V})\subset S_0$, a contradiction.

Finally, we can show that $r$ is upper semi-continuous. Indeed, let $r(y)\subset W$ with $y\in\mathbb I^\tau$ and
$W\in\mathcal T_X$. Then there exist finitely many $U_i\in\mathcal
T_{X}$, $i=1,2,..,k$, such that $y\in \bigcap_{i=1}^{i=k}e(U_i)$ and
$\bigcap_{i=1}^{i=k}I_{\mathcal S}(\overline{U}_i)\subset W$.
Obviously, $r(y')\subset W$ for all
$y'\in\bigcap_{i=1}^{i=k}e(U_i)$. So, $r$ is an usco retraction from
$\mathbb I^\tau$ onto $X$. According to \cite{dr}, $X$ is a Dugundji
space.

Suppose now, that $X$ is connected. By \cite{vm}, any set of the form $I_{\mathcal
S}(F)$ is $\mathcal S$-convex, so is
each $r(y)$. According to
\cite[Corollary 1.5.8]{vm}, all closed $\mathcal S$-convex subsets
of $X$ are also connected. Hence, the map $r$, defined by $(1)$, is
connected-valued. Consequently, by \cite{dr}, $X$ is an absolute
extensor in dimension 1, and there exists a map $r_1:\mathbb I^\tau
\to \exp X$ with $r_1(x)=\{x\}$ for all $x\in X$, see \cite[Theorem
3.2]{f}. On the other hand, since $X$ is normally
supercompact, there exists a retraction $r_2$ from $\exp X$ onto
$X$, see \cite[Corollary 1.5.20]{vm}. Then the composition $r_2\circ
r_1\colon\mathbb I^\tau\to X$ is a (single-valued) retraction. So,
$X\in AR$. $\Box$

{\em Proof of Corollary $1.6.$}
It is well known that $\lambda$ is a continuous functor preserving
open maps, see \cite{f1}. So, $\lambda X$ is $\kappa$-metrizable. Moreover, $\lambda X$
is connected if so is $X$. On the other hand,
the family $\mathcal S=\{F^+: F\hbox{~}\mbox{is closed in}\hbox{~}X\}$, where
$F^+=\{\eta\in\lambda X: F\in\eta\}$, is a binary normal subbase for $\lambda X$.
Observe that $\lambda f$ is $\mathcal S$-convex because
$(\lambda f)^{-1}(\nu)=\bigcap\{f^{-1}(H)^+:H\in\nu\}$ for every $\nu\in\lambda Y$.
Then, Proposition 1.5 completes the proof. $\Box$

The next proposition shows that the statements from Proposition 1.5 and Corollary 1.6
are actually equivalent. At the same time it provides more information about validity of Corollary 1.4.
\begin{pro}
For any class $\mathcal A$  the following statements are equivalent:
\begin{itemize}
\item[(i)] If $X$ is a 
compactum possessing a normal binary closed subbase $\mathcal S$,
then any open $\mathcal S$-convex surjection $f\colon X\to Y$ is
$\mathcal A$-soft. 
\item[(ii)] The map $\lambda f\colon\lambda X\to \lambda Y$ is $\mathcal A$-soft
for any compactum $X$ 
and any open surjection $f\colon X\to Y$.
\end{itemize}
\end{pro}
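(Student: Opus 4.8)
The plan is to prove the two implications separately; (i)$\Rightarrow$(ii) is immediate and (ii)$\Rightarrow$(i) carries all the weight. For (i)$\Rightarrow$(ii), given an arbitrary compactum $X$ and an open surjection $f\colon X\to Y$, I would apply (i) to the superextension $\lambda X$. Indeed $\lambda X$ is a compactum carrying the binary normal closed subbase $\mathcal S=\{F^+:F\subset X\text{ closed}\}$; the map $\lambda f\colon\lambda X\to\lambda Y$ is an open surjection because $\lambda$ is a continuous functor preserving open maps and surjections \cite{f1}; and $\lambda f$ is $\mathcal S$-convex by the identity $(\lambda f)^{-1}(\nu)=\bigcap\{f^{-1}(H)^+:H\in\nu\}$ already used in the proof of Corollary 1.6. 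Hence (i) gives that $\lambda f$ is $\mathcal A$-soft, which is exactly (ii).

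For (ii)$\Rightarrow$(i), let $(X,\mathcal S)$ be a compactum with a binary normal closed subbase and $f\colon X\to Y$ an open $\mathcal S$-convex surjection. Since $f$ is an open surjection between compacta, (ii) applies and $\lambda f\colon\lambda X\to\lambda Y$ is $\mathcal A$-soft. I would transfer this to $f$ along the canonical embeddings $e_X\colon X\to\lambda X$, $x\mapsto\eta_x$, and $e_Y\colon Y\to\lambda Y$, which satisfy $\lambda f\circ e_X=e_Y\circ f$, together with a retraction $r_X\colon\lambda X\to X$. Given a lifting problem for $f$ --- a space $Z\in\mathcal A$, a closed $A\subset Z$ and maps $k\colon Z\to Y$, $h\colon A\to X$ with $f\circ h=k|A$ --- I would pass to the lifting problem $e_Y\circ k$, $e_X\circ h$ for $\lambda f$ (consistent since $\lambda f\circ e_X\circ h=e_Y\circ f\circ h=(e_Y\circ k)|A$), solve it by $\mathcal A$-softness of $\lambda f$ to get $G\colon Z\to\lambda X$ with $G|A=e_X\circ h$ and $\lambda f\circ G=e_Y\circ k$, and put $g=r_X\circ G\colon Z\to X$.

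The retraction I have in mind sends $\eta\in\lambda X$ to the unique point of $\bigcap\{S\in\mathcal S:S\in\eta\}$: this set is nonempty because $\eta\cap\mathcal S$ is linked and $\mathcal S$ is binary, and it is a singleton because two distinct points in it could be separated by normality into $S_0,S_1\in\mathcal S$ with $S_0\cup S_1=X$, after which maximality of $\eta$ yields a contradiction. Since $\bigcap\{S\in\mathcal S:x\in S\}=I_{\mathcal S}(\{x\})=\{x\}$ we get $r_X\circ e_X=\id$, so $g|A=h$. To see $f\circ g=k$, note that $\lambda f\circ G=e_Y\circ k$ means $\lambda f(G(z))=\eta_{k(z)}$, i.e. $f^{-1}(B)\in G(z)$ precisely when $k(z)\in B$; taking $B=\{k(z)\}$ gives $f^{-1}(k(z))\in G(z)$. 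As $G(z)$ is a maximal linked system, every $S\in\mathcal S$ containing $f^{-1}(k(z))$ lies in $G(z)$, whence $r_X(G(z))\in I_{\mathcal S}(f^{-1}(k(z)))=f^{-1}(k(z))$, the last equality because the fibre is closed and $\mathcal S$-convex \cite[Theorem 1.5.7]{vm}. Thus $f(g(z))=k(z)$ for every $z$, and $f$ is $\mathcal A$-soft.

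The main obstacle is the retraction $r_X$: its single-valuedness and, above all, its continuity. For continuity I would either invoke the known fact that a normally supercompact space is a retract of its superextension, or verify it directly by the subbase bookkeeping already used for the continuity of $h$ in Theorem 1.1 and the upper semicontinuity of $r$ in Proposition 1.5. The remaining identities then rest, as above, on the interplay of binarity, normality, maximal linkedness and $\mathcal S$-convexity of the fibres; note in particular that this argument never needs $Y$ itself to be normally supercompact, the singleton trick $B=\{k(z)\}$ replacing any retraction on the $Y$-side.
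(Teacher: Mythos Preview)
Your proposal is correct and follows essentially the same route as the paper: for (i)$\Rightarrow$(ii) you apply (i) to $\lambda X$ with the subbase $\{F^+\}$, and for (ii)$\Rightarrow$(i) you lift the problem to $\lambda f$, solve it there, and push back via the retraction $r_X(\eta)=\bigcap\{S\in\mathcal S:S\in\eta\}$ (which the paper cites as \cite[Corollary~2.3.7]{vm}), using $\mathcal S$-convexity of the fibres to get $r_X(G(z))\in I_{\mathcal S}(f^{-1}(k(z)))=f^{-1}(k(z))$. The only cosmetic difference is that the paper obtains $f^{-1}(k(z))\in G(z)$ directly from $(\lambda f)^{-1}(y)=(f^{-1}(y))^+$ rather than via your singleton trick $B=\{k(z)\}$.
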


\begin{proof}
$(i)\Rightarrow (ii)$ Let $X$ be a
compactum and  $f\colon X\to Y$ be an open surjection. It
is easily seen that
$\lambda f$ is an open surjection too. 
We already noted that $\mathcal S=\{F^+:F\subset
X\hbox{~}\mbox{is closed}\}$ is a normal binary closed subbase for
$\lambda X$ and $\lambda f$ is a $\mathcal
S$-convex and  open map. Hence, by (i), $\lambda f$ is $\mathcal A$-soft.

\noindent $(ii)\Rightarrow (i)$. Suppose $X$ is a
compactum possessing a normal binary closed subbase $\mathcal S$,
and $f\colon X\to Y$ is an $\mathcal S$-convex open surjection. To
show that $f$ is $\mathcal A$-soft, take a space $Z\in\mathcal A$, its
closed subset $A$ and two maps $h:A\to X$, $g\colon Z\to Y$ with
$f\circ h=g|A$. So, we have the following diagram, where $i_X$ and
$i_Y$ are embeddings defined by $x\to\eta_x$ and $y\to\eta_y$,
respectively.

$$
\begin{CD}
A@>{h}>>X@>{i_X}>>\lambda X\cr @V{id}VV @VV{f}V @VV{\lambda f}V\cr
Z@>{g}>>Y@>{i_Y}>>\lambda Y\cr
\end{CD}
$$

\medskip\noindent Since, by (ii), $\lambda f$ is $\mathcal A$-soft, there
exists a map $g_1\colon Z\to\lambda X$ such that $h=g_1|A$ and
$\lambda f\circ g_1=g$. The last equality implies that
$g_1(Z)\subset (\lambda f)^{-1}(Y)$. According to \cite[Corollary
2.3.7]{vm}, there exists a retraction $r\colon\lambda X\to X$,
defined by $$r(\eta)=\bigcap\{F\in\mathcal
S:F\in\eta\}.\leqno{(2)}$$ Consider now the map $\bar{g}=r\circ
g_1\colon Z\to X$. Obviously, $\bar{g}$ extends $h$. Let us show
that $f\circ\bar{g}=g$. Indeed, for any $z\in Z$ we have $$g_1(z)\in
(\lambda f)^{-1}(g(z))=(f^{-1}(g(z)))^+.$$ Since $f$ is $\mathcal
S$-convex, $I_{\mathcal S}\big(f^{-1}(g(z))\big)=f^{-1}(g(z))$, see
\cite[Theorem 1.5.7]{vm}. Hence, $f^{-1}(g(z))$ is the intersection
of the family $\{F\in\mathcal S:f^{-1}(g(z))\subset F\}$ whose
elements belong to any $\eta\in (\lambda f)^{-1}(g(z))$. It follows
from $(2)$ that $r(\eta)\in f^{-1}(g(z))$, $\eta\in (\lambda
f)^{-1}(g(z))$. In particular, $\bar{g}(z)\in f^{-1}(g(z))$.
Therefore, $f\circ\bar{g}=g$.
\end{proof}

The following corollary follows from Corollary 1.4 and Proposition 3.1.

\begin{cor}
If $X$ is a compactum with a binary normal closed subbase $\mathcal S$
such that $\lambda X$ is an absolute extensor for a given class $\mathcal A$, then
any open $\mathcal S$-convex surjection $f\colon X\to Y$ is
$\mathcal A$-soft.
\end{cor}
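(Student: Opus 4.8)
The plan is to apply Corollary 1.4 not to $X$ but to its superextension $\lambda X$, and then to carry $\mathcal A$-softness back from $\lambda f$ to $f$ by the lifting argument already present in the proof of Proposition 3.1. In other words, I would build the proof as the composite of two facts established above: Corollary 1.4 (which produces $\mathcal A$-softness from an absolute-extensor hypothesis together with an $\mathcal S$-open $\mathcal S$-convex surjection) and the implication $(ii)\Rightarrow(i)$ of Proposition 3.1 (which transfers $\mathcal A$-softness of $\lambda f$ down to $f$).

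First I would record that $\lambda X$ carries the binary normal closed subbase $\mathcal S'=\{F^+:F\subset X\text{ is closed}\}$, as noted in the proof of Corollary 1.6. Since $\lambda$ is a continuous functor preserving open maps and surjections, $\lambda f\colon\lambda X\to\lambda Y$ is an open surjection; it is $\mathcal S'$-convex because $(\lambda f)^{-1}(\nu)=\bigcap\{f^{-1}(H)^+:H\in\nu\}$ for every $\nu\in\lambda Y$, exactly as in the proof of Corollary 1.6. Moreover every open map is automatically $\mathcal S'$-open, since each $\lambda X\backslash F^+$ is open. Thus $\lambda f$ is an $\mathcal S'$-open $\mathcal S'$-convex surjection.

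Next, the hypothesis that $\lambda X$ is an absolute extensor for all $Z\in\mathcal A$ is precisely what permits me to invoke Corollary 1.4, now with $(\lambda X,\mathcal S')$ and $\lambda f$ playing the roles of $(X,\mathcal S)$ and $f$. This immediately gives that $\lambda f$ is $\mathcal A$-soft. I would then reproduce the implication $(ii)\Rightarrow(i)$ from the proof of Proposition 3.1, which uses nothing beyond the $\mathcal A$-softness of $\lambda f$ for this particular $f$: given $Z\in\mathcal A$, a closed $A\subset Z$ and maps $h\colon A\to X$, $g\colon Z\to Y$ with $f\circ h=g|A$, the embeddings $i_X,i_Y$ together with the $\mathcal A$-softness of $\lambda f$ produce a lift $g_1\colon Z\to\lambda X$ extending $i_X\circ h$ with $\lambda f\circ g_1=i_Y\circ g$; composing with the van Mill retraction $r\colon\lambda X\to X$ of $(2)$ (available since $X$ is normally supercompact) yields $\bar g=r\circ g_1$, which extends $h$ and satisfies $f\circ\bar g=g$.

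I do not expect a genuine obstacle here: the content of the corollary is conceptual rather than technical, namely the observation that the absolute-extensor assumption on $\lambda X$ is exactly the input Corollary 1.4 demands in order to certify $\lambda f$ as $\mathcal A$-soft. The only step that requires any care is the final equality $f\circ\bar g=g$, and it goes through verbatim as in Proposition 3.1, using the $\mathcal S$-convexity of $f$ to identify $f^{-1}(g(z))$ with $I_{\mathcal S}(f^{-1}(g(z)))$ and thereby to place $r(g_1(z))$ inside $f^{-1}(g(z))$ for every $z\in Z$.
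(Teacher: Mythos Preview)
Your proposal is correct and is precisely the paper's approach: the paper's one-line proof simply says the corollary follows from Corollary 1.4 and Proposition 3.1, and you have unpacked this exactly as intended, applying Corollary 1.4 to $(\lambda X,\mathcal S')$ to obtain $\mathcal A$-softness of $\lambda f$, and then using the proof of the implication $(ii)\Rightarrow(i)$ of Proposition 3.1 (which, as you rightly note, needs only the $\mathcal A$-softness of $\lambda f$ for the given $f$) to transfer this to $f$.
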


\textbf{Acknowledgments.} The author would like to express his
gratitude to M. Choban for his continuous support and valuable remarks.


\end{document}